\documentclass[11pt,letterpaper]{amsart}
\usepackage{amsfonts, amsmath, amssymb, amscd, amsthm, graphicx,enumerate}
\usepackage{epsfig, color}
\usepackage{hyperref}

\hoffset -1.8cm \voffset -1.1cm \textwidth=6.5in \textheight=8.5in
\tolerance=9000 \emergencystretch=5pt \vfuzz=2pt
\parskip=1.5mm

\makeatletter
\def\blfootnote{\xdef\@thefnmark{}\@footnotetext}
\makeatother


\newtheorem{thm}{Theorem}[section]
\newtheorem{cor}[thm]{Corollary}

\newtheorem{prop}[thm]{Proposition}
\newtheorem{propdfn}[thm]{Proposition-Definition}

\theoremstyle{definition}
\newtheorem{defn}[thm]{Definition}
\theoremstyle{remark}
\newtheorem{rem}[thm]{Remark}

\newfont{\eufm}{eufm10}


\renewcommand{\phi}{\varphi}

\newcommand{\Aut}{{\rm Aut}}


\def\epsilon{\varepsilon}
\def\phi{\varphi}

\def\hat{\widehat}











\begin{document}

\title{Primitivity rank for random elements in free groups}

\author{Ilya Kapovich}

\address{Department of Mathematics and Statistics, Hunter College of CUNY\newline
  \indent 695 Park Ave, New York, NY 10065
  \newline \indent  {\url{http://math.hunter.cuny.edu/ilyakapo/}}, }
  \email{\tt ik535@hunter.cuny.edu}
\keywords{free group, genericity, primitive elements, random walks}

\thanks{The author was supported by the individual NSF
  grant DMS-1905641}
  
\makeatletter
\@namedef{subjclassname@2020}{%
	\textup{2020} Mathematics Subject Classification}
\makeatother  

\subjclass[2020]{Primary 20E05, Secondary 20F65, 20F69, 37D99, 60B15}

\date{}
\maketitle

\begin{abstract}
For a free group $F_r$ of finite rank $r\ge 2$ and a nontrivial element $w\in F_r$ the \emph{primitivity rank} $\pi(w)$ is the smallest rank of a subgroup $H\le F_r$ such that $w\in H$ and that $w$ is not primitive in $H$ (if no such $H$ exists, one puts $\pi(w)=\infty$). The set of all subgroups of $F_r$ of rank $\pi(w)$ containing $w$ as a non-primitive element is denoted $Crit(w)$. These notions were introduced by Puder in \cite{Pu14}. We prove that there exists an exponentially generic subset $V\subseteq F_r$ such that for every $w\in V$ we have $\pi(w)=r$ and $Crit(w)=\{F_r\}$.
\end{abstract}




\section{Introduction}

Recall that an element $g$ of a free group $F$ is \emph{primitive} in $F$ if $g$ belongs to some free basis of $F$. Let $r\ge 2$ be an integer and let $F_r=F(A)$, where $A=\{a_1,\dots, a_r\}$ be the free group of rank $r$. In \cite{Pu14} Puder introduced the following natural measure of algebraic complexity for elements of $F_r$. For an element $1\ne g\in F_r$ the \emph{primitivity rank} $\pi(g)$ in $F_r$ is defined as the smallest rank of a subgroup $H\le F_r$ containing $w$ as a non-primitive element; if no such $H$ exists put $\pi(g)=\infty$. Puder observed that for $1\ne g\in F_r$ one has $\pi(g)=\infty$ if and only if $g$ is primitive in $F_r$. For any non-primitive nontrivial $g\in F_r$ one obviously has $\pi(w)\le r$. For $1\ne g\in F_r$  the \emph{critical set of $g$}, denoted $Crit(g)$, is the set of all subgroups $H$ of $F_r$ containing $g$ as a non-primitive element and such that $rank(H)=\pi(g)$. It is known that the set $Crit(g)$ is always finite (see \cite{PP15}) and that given $g$ one can algorithmically compute $\pi(g)$ and the set $Crit(g)$. Note that the primitivity rank is obviously automorphically invariant. For any $1\ne g\in F_r$ and $\phi\in \Aut(F_r)$ we have $\pi(g)=\pi(\phi(g))$. Moreover, in this case $Crit(\phi(g))=\phi\left( Crit(g)\right) =\{\phi(H)|H\in Crit(g)\}$. 
Puder~\cite{Pu14} and later Puder-Parzanchevski~\cite{PP15} and Hanany-Puder~\cite{HP20} showed that the primitivity rank and the critical set are closely related to understanding the behavior of the \emph{word measures} on finite symmetric groups $S_N$ corresponding to words in $F_r$. Here for a word $w=w(a_1,\dots, a_r)\in F_r$ the corresponding word measure on $S_N$ is the image under the map $w:(S_N)^r\to S_r$ of the uniform probability distribution on $(S_N)^r$. This connection is illustrated in Corollary~\ref{cor:main} to our main result below. 

However, the primitivity rank and the critical set for an element of $F_r$ are natural algebraic notions that deserve to be studied in more detail in their own right.  Indeed, Klimakov~\cite{Kl} extended the notion of primitivity rank to the context of free algebras of Schreier varieties and obtained some structural results there.
Also, a recent series of papers of Louder and Wilton~\cite{LW18,LW21} shows that for $w\in F_r$ there is a connection between the primitivity rank $\pi(w)$ and subgroup properties of the one-relator group $G_w=\langle a_1,\dots, a_r| w=1\rangle$. In particular they prove that if $r\ge 3$ and $\pi(w)\ge 3$ then the group $G_w$ is coherent, that is all of its finitely generated subgroups are finitely presented.

In the present paper we study the primitivity rank and the critical set for "generic" or "random" freely reduced and cyclically reduced elements in $F_r$. The idea of "genericity" in the context of infinite groups arose in the 1990s in the work of Gromov~\cite{Gromov}, Ol'shanskii~\cite{O92}, Arzhantseva~\cite{AO,A1,A2} and others. The notion of genericity for subsets of free groups that we use in this paper was formalized \cite{KMSS03} when defining generic-case complexity of group-theoretic algorithms. If $Z\subseteq F_r$, a subset $S\subseteq Z$ is \emph{exponentially generic} in $Z$ if
\[
\frac{\#\{w\in S: |w|_A\le n\}}{\#\{w\in Z: |w|_A\le n\}}\to_{n\to\infty} 1
\]
with exponentially fast convergence. See Section~\ref{sect:generic} for more precise definitions and additional details. In this paper we are particularly interested in the cases $Z=F_r$ and $Z=U_r$, where $U_r$ is the set of all cyclically reduced words in $F_r=F(A)$.

Our main result is:

\begin{thm}\label{thm:main}
Let $r\ge 2$ be an integer and let $F_r=F(A)$, where $A=\{a_1,\dots, a_r\}$, be the free group of rank $r$. 

\begin{enumerate}

\item There exists a subset $Y_r\subseteq F_r$ such that $Y_r$ is exponentially generic in $F_r$ and that for every $w\in Y_r$ we have  $\pi(w)=r$ and $Crit(w)=\{F_r\}$. 

\item There exists a subset $Q_r\subseteq U_r$ such that  $Q_r$ is exponentially generic in $U_r$ and that for every $w\in Q_r$ we have  $\pi(w)=r$ and $Crit(w)=\{F_r\}$. 

\item Let $W_n\in F(A)$ be obtained by a simple non-backtracking random walk of length $n$. Then, with probability tending to $1$ exponentially fast as $n\to \infty$, the word $W_n$ has the property that $\pi(W_n)=r$ and $Crit(W_n)=\{F_r\}$.   
\end{enumerate}
\end{thm}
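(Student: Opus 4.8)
The plan is to work with cyclically reduced words and to translate the problem into the language of Stallings core graphs, following the framework of Puder--Parzanchevski~\cite{PP15}; I would first prove part (2) and then deduce (1) and (3). For a cyclically reduced $w$ that is not a proper power, let $C_w$ denote the core graph of $\langle w\rangle$, i.e.\ the labeled cycle of length $|w|_A$ reading $w$. The starting point is the fact (following \cite{PP15}) that every $H\in Crit(w)$ is an \emph{algebraic extension} of $\langle w\rangle$: if $\langle w\rangle$ were contained in a proper free factor $M\lneq H$, then $w$ would be non-primitive in $M$ and $rank(M)<rank(H)=\pi(w)$, contradicting minimality. For an algebraic extension the inclusion $\langle w\rangle\le H$ induces a \emph{surjective} morphism of core graphs $q\colon C_w\twoheadrightarrow \Gamma_H$, where $\Gamma_H$ is a folded graph immersed in the rose $R_r$ and $rank(H)=b_1(\Gamma_H)$. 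Since a subgroup $H\ne F_r$ with $rank(H)\le r$ has infinite index (as $r<2r-1$ for $r\ge 2$), its core graph $\Gamma_H$ is not a cover of $R_r$. Thus it suffices to show that, generically, the only surjective folded quotient $\Gamma$ of $C_w$ with $b_1(\Gamma)\le r$ in which $w$ is non-primitive is the rose $R_r$ itself, which realizes $H=F_r$; this simultaneously forces $\pi(w)\ge r$ and $Crit(w)\subseteq\{F_r\}$.

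The easy half is to certify that $R_r$ does occur, i.e.\ that $\pi(w)\le r$ and $F_r\in Crit(w)$. First I would record the standard exponentially generic events that $w$ is cyclically reduced and not a proper power, so that $\pi(w)\ge 2$. Next I would use that, with exponentially high probability, every reduced word of length $2$ occurs as a subword of the cyclic word $w$, so that the Whitehead graph $Wh_A(w)$ is the complete graph on the $2r$ vertices $A\cup A^{-1}$. A complete graph on $\ge 4$ vertices is connected with no cut vertex, so by Whitehead's theorem $|w|_A$ is minimal in its $\Aut(F_r)$-orbit; since a primitive element of minimal length has length $1$ and $|w|_A>1$, we conclude that $w$ is non-primitive in $F_r$. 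Hence $\pi(w)\le r$ and $F_r\in Crit(w)$.

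The heart of the argument, and the step I expect to be the main obstacle, is the upper bound: the set of $w$ of length $n$ that admit \emph{some} surjective folded quotient $\Gamma\ne R_r$ with $b_1(\Gamma)\le r$ in which $w$ is non-primitive should have cardinality $\le C(2r-1-\delta)^n$ for a fixed $\delta>0$. The difficulty is that the family of candidate quotients is infinite, since the number of vertices of $\Gamma$ is unbounded in $n$, so a naive union bound fails. I would organize the estimate in two stages. First, a \emph{structural} stage: I claim that non-primitivity of $w$ in $\pi_1(\Gamma)$, for $\Gamma\ne R_r$ with $b_1(\Gamma)\le r$, forces $q$ to have linear fold defect, i.e.\ $w$ must traverse a definite proportion of the edges of $\Gamma$ with multiplicity $\ge 2$, equivalently $E(\Gamma)\le (1-c)\,n$ for some $c=c(r)>0$. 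The point is that if $q$ were nearly edge-injective then $w$ would be read by a nearly Eulerian circuit, which exhibits $w$ as a product of the arc-generators and makes it primitive in $\pi_1(\Gamma)$; the commutator-type examples show that genuine non-primitivity costs a linear amount of folding. Second, an \emph{entropy} stage: a folded graph that is not a cover of $R_r$ carries reduced closed paths with exponential growth rate strictly below $2r-1$, and after suppressing valence-$2$ vertices the topological type of $\Gamma$ ranges over a finite list (since $b_1(\Gamma)\le r$). Combining the multiplicity bound of the first stage with a Perron--Frobenius growth estimate for each bounded topological type should yield a \emph{uniform} exponential gap. I expect the genuinely delicate point to be making the structural claim quantitative and uniform over this noncompact family of quotients, presumably via a transfer-operator estimate together with the multiplicity/deficiency bookkeeping.

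Finally, I would assemble the three statements. Dividing the bad count by $\#\{w\in U_r:\,|w|_A=n\}\ge c(2r-1)^n$ gives an exponentially small proportion; intersecting with the generic events of the easy half produces the exponentially generic set $Q_r\subseteq U_r$, proving (2). For (1), the properties $\pi(w)=r$ and $Crit(w)=\{F_r\}$ are invariant under conjugation (they are $\Aut(F_r)$-invariant by the remarks in the introduction, and inner automorphisms fix $F_r$), so they pass between $w$ and its cyclic reduction; the standard transfer of genericity from $U_r$ to $F_r$ (see Section~\ref{sect:generic}) then yields $Y_r$. For (3), the non-backtracking random walk $W_n$ is uniformly distributed on the sphere of reduced words of length $n$, so $\Pr[W_n\text{ is bad}]$ equals the spherical proportion bounded above, which tends to $0$ exponentially fast.
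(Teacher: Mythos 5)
Your overall architecture matches the paper's quite closely: the ``easy half'' (all length-$2$ subwords occur, so the Whitehead graph of $w$ is complete and $w$ is non-primitive in $F_r$, whence $\pi(w)\le r$ and $F_r\in Crit(w)$) is exactly the paper's step, as are the reductions for parts (1) and (3) (conjugation-invariance of $\pi$ and $Crit$ plus the transfer of genericity from $U_r$ to $F_r$, and uniformity of the non-backtracking walk on spheres). Your structural stage is also essentially sound, though you should state it in its sharp form rather than via the ``nearly Eulerian'' heuristic: since $H\in Crit(w)$ forces $\gamma_w$ to cross every topological edge of $\Gamma_H$, and since a closed path crossing some edge \emph{exactly once} forces that edge to be non-separating and exhibits $w$ as primitive in $\pi_1(\Gamma_H,\ast)$ (collapse a spanning tree avoiding that edge), non-primitivity forces \emph{every} edge to be crossed at least twice, so $vol(\Gamma_H)\le |w|/2$. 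This is in fact the same mechanism the paper uses for its large-volume case, except that the paper gets ``crossed exactly once'' for the longest maximal arc from the $C'(\lambda)$ small cancellation hypothesis, whereas your version dispenses with small cancellation entirely and pushes everything into a volume bound.

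The genuine gap is your ``entropy stage.'' Having reduced to graphs with $vol(\Gamma)\le(1-c)n$, bounded first Betti number, and either rank $\le r-1$ or a vertex of degree $<2r$, you still must show that the words of length $n$ readable in \emph{some} such graph form an exponentially negligible set --- and this is precisely the Arzhantseva--Ol'shanskii non-readability theorem (Proposition~\ref{prop:AO} in the paper, from \cite{AO,A1}), which the paper deliberately quotes as a black box rather than re-proves. Your Perron--Frobenius sketch does not work as stated: the family of quotient graphs is noncompact (volume grows linearly in $n$), and after suppressing valence-$2$ vertices the finitely many topological types still carry arc lengths as unbounded parameters, so the per-graph spectral gaps are not uniform; moreover a naive union bound fails because the number of labeled graphs of volume $\mu n$ is itself of order $(2r-1)^{\mu n}$, which swallows any per-graph saving, and graphs with many short arcs admit $(2r-1)^{\Theta(n)}$ turn sequences, so neither the ``labels'' count nor the ``paths'' count alone is small. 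You correctly flag this as the delicate point, but flagging it does not fill it: the uniform entropy-lowering bookkeeping is exactly the ``heavy duty counting'' that the paper attributes to \cite{AO,A1} and explicitly declines to reprove. The efficient repair is to cite Proposition~\ref{prop:AO} (note that Definition~\ref{LML} permits $\mu$ up to $1$, so your $vol\le n/2$ reduction is covered by taking $\mu=1/2$, with $\lambda$ chosen to satisfy the constraint $\lambda\le\mu/(15r+3\mu)$); with that citation your argument closes and becomes a mild variant of the paper's proof of Theorem~\ref{thm:primrank}, trading the small-cancellation longest-arc case for the multiplicity-two volume bound.
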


Note that the generic sets $Q_r$ and $Y_r$ in the conclusion of Theorem~\ref{thm:main} can in fact be chosen to be algorithmically recognizable because the conditions $P'(\lambda,\mu,r)$ and $(P'(\lambda,\mu,r))^\circ$ defining these sets in the proof of Theorem~\ref{thm:main} are algorithmic if $\lambda$ and $\mu$ are chosen to be rational numbers.

Puder~\cite[Corollary 8.3]{Pu15} also proved the existence of a generic subset of $F_r$ such that for all elements of that subset $w$ the primitivity rank is equal to $r$. His method of proof was very different from ours and it did not imply that $Crit(w)=\{F_r\}$ for generic elements. 

As an application of Theorem~\ref{thm:main}, combined with the result of Puder and Parzanchevski, we obtain precise asymptotics for the number of fixed points $\#fix(\sigma)$ of a random permutation $\sigma$ in $S_N$ with respect to the "word measure" on the symmetric group $S_N$ defined by a generic word $w\in F_r$:

\begin{cor}\label{cor:main}
Let $r\ge 2$ be an integer and let $F_r=F(A)$, where $A=\{a_1,\dots, a_r\}$. Let $Y_r\subseteq F_r, Q_r\subseteq U_r$ be the exponentially generic subsets provided by Theorem~\ref{thm:main}. Then for every $w\in Y_r$ and every $w\in Q_r$ we have
\[
\mathbb E_w[\#fix(\sigma)] \underset{N\to\infty}{=} \ 1+\frac{1}{N^{r-1}}+O\left(\frac{1}{N^r} \right)
\]
Here the expectation is taken with respect to the \emph{word measure} on the symmetric group $S_N$ defined by the word $w(a_1,\dots,a_r)$, that is $\sigma=w(\sigma_1,\dots,\sigma_r)$, where $\sigma_1,\dots \sigma_r\in S_N$ are chosen uniformly independently at random in $S_N$. 
\end{cor}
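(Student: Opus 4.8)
The plan is to deduce Corollary~\ref{cor:main} directly from Theorem~\ref{thm:main} together with the asymptotic expansion of the expected number of fixed points of a word measure due to Puder and Parzanchevski~\cite{PP15}. First I would fix $w$ in one of the exponentially generic sets $Y_r$ or $Q_r$ produced by Theorem~\ref{thm:main}. By that theorem we have $\pi(w)=r$ and $Crit(w)=\{F_r\}$; in particular $w$ is a fixed nontrivial (indeed non-primitive) element of $F_r$ whose critical set is a singleton, so $|Crit(w)|=1$.

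Next I would invoke the quantitative result of~\cite{PP15} (see also~\cite{HP20}), which expresses the leading terms of the fixed-point statistic of an arbitrary word measure in terms of the primitivity rank and the critical set: for any $1\ne w\in F_r$,
\[
\mathbb E_w[\#fix(\sigma)] \ \underset{N\to\infty}{=}\ 1+\frac{|Crit(w)|}{N^{\pi(w)-1}}+O\!\left(\frac{1}{N^{\pi(w)}}\right),
\]
where $\sigma=w(\sigma_1,\dots,\sigma_r)$ and $\sigma_1,\dots,\sigma_r$ are independent uniform elements of $S_N$. Substituting $\pi(w)=r$ and $|Crit(w)|=1$ from the previous step yields
\[
\mathbb E_w[\#fix(\sigma)] \ \underset{N\to\infty}{=}\ 1+\frac{1}{N^{r-1}}+O\!\left(\frac{1}{N^{r}}\right),
\]
which is exactly the asserted asymptotic. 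Since the argument uses only the values $\pi(w)=r$ and $|Crit(w)|=1$, it applies verbatim to every $w\in Y_r$ and every $w\in Q_r$.

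The only point requiring care — and the step I would treat as the main obstacle — is to match the shape of the cited expansion precisely with the statement of the corollary. I would verify that in~\cite{PP15} the coefficient of $N^{1-\pi(w)}$ is indeed the cardinality $|Crit(w)|$, rather than a signed or Euler-characteristic-weighted sum over the critical subgroups, and in particular that the single critical subgroup $F_r$ contributes with coefficient $+1$; a convenient sanity check is the word $w=x^2$, for which $\pi(w)=1$, $Crit(w)=\{\langle x\rangle\}$, and $\mathbb E_w[\#fix(\sigma^2)]=2$, consistent with the formula. I would also note that no uniformity in $w$ is needed: the corollary asserts the asymptotic for each fixed generic word separately, so the implied constant in the error term $O(N^{-\pi(w)})$ may depend on $w$, with the limit taken as $N\to\infty$ for $w$ held fixed. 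Finally, since $\#fix(w(\sigma_1,\dots,\sigma_r))$ is a class function of $w$, it is insensitive to cyclic reduction, so the case $w\in Y_r\subseteq F_r$ (where $w$ need not be cyclically reduced) is handled on equal footing with $w\in Q_r\subseteq U_r$.
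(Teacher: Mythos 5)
Your proposal is correct and follows essentially the same route as the paper: the paper's proof likewise consists of quoting the Puder--Parzanchevski expansion $\mathbb E_w[\#fix(\sigma)] = 1+\frac{|Crit(w)|}{N^{\pi(w)-1}}+O\left(\frac{1}{N^{\pi(w)}}\right)$ from \cite[Theorem 1.8]{PP15} and substituting $\pi(w)=r$ and $|Crit(w)|=1$ supplied by Theorem~\ref{thm:main}. Your extra verifications (that the coefficient is the unsigned cardinality $|Crit(w)|$, the $w=x^2$ sanity check, and the observation that the implied constant may depend on the fixed word $w$) are all sound, though the paper does not spell them out.
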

\begin{proof}
Puder and Parzanchevski proved~\cite[Theorem 1.8]{PP15} that for every $1\ne w\in F_r$ one has
\[
\mathbb E_w[\#fix(\sigma)] \underset{N\to\infty}{=} \ 1+\frac{|Crit(w)|}{N^{\pi(w)-1}}+O\left(\frac{1}{N^{\pi(w)}} \right).
\] 
The statement of the corollary now follows directly from Theorem~\ref{thm:main}.
\end{proof}

As noted above, the results of Louder and Wilton~\cite{LW18,LW21} imply that if $r\ge 3$ and $w\in F_r$ has $\pi(w)\ge 3$ then the group $G_w=\langle a_1,\dots, a_r| w=1\rangle$ is coherent. Since generic elements $w\in F_r$ have $\pi(w)=r$, it follows that for $r\ge 3$ and a generic $w\in F_r$ the group $G_w$ is coherent. This fact was first proved by Sapir and Spukalova~\cite{SaSp}, and also observed by Louder and Wilton~\cite{LW21}, with a reference to Puder's result on the primitivity rank of generic elements~\cite{Pu15}.  

Louder and Wilton conjectured~\cite{LW18} that if $1\ne w\in F_r$ is not a proper power and has $\pi(w)>2$ then the corresponding one-relator group $G_w$ is word-hyperbolic (note that by \cite[Theorem 1.4]{LW18} all 2-generator subgroups in $G_w$ are free and hence $G_w$ has no Baumslag-Solitar subgroups).  Cashen and Hoffmann~\cite{CH20} obtained some experimental evidence for this conjecture, verifying it for all $w$ with $|w|\le 17$ and $r\le 4$. For $1\ne w\in F_r$ which is not a proper power Louder and Wilton define  \emph{$w$-subgroups} of $F_r$ as maximal with respect to inclusion elements of $Crit(w)$.  They show in \cite{LW18} that for $w$ as above if $\pi(w)=2$ then $w$ has a unique $w$-subgroup. In \cite{CH20} raise the question of whether the a $w$-subgroup is always unique. Note that if $\pi(w)=r$ then obviously $F_r$ is the unique maximal element in $Crit(w)$, and thus, in particular, generic elements $w$ of $F_r$ have unique $w$-subgroups.  Our Theorem~\ref{thm:main} provides a maximally sharpened version of this statement for generic elements, since in that case we conclude that the entire set $Crit(w)$ consists of a single element, namely $F_r$ itself.

As shown in \cite{PP15}, both $\pi(w)$ and the set $Crit(w)$ are algorithmically computable in terms of $w$. If $w\in F_r$ is a nontrivial cyclically reduced word (if we are given a word that is not cyclically reduced, we first cyclically reduce it), we write $w$ on a circle of length $w$ to get an $A$-graph $C_w$. Then we consider all possible "quotients"  of $C_w$ under surjective morphisms of folded $A$-graphs. That amounts to picking a partition of the vertex set of $C_w$, collapsing each element of that partition and then folding. For each resulting $A$-graph $\Gamma$ we check, using Whitehead's algorithm, if the closed path $\gamma_w$ labeled by $w$ at the base-vertex $\ast$ of $\Gamma$ is a primitive element of $\pi_1(\Gamma,\ast)$. We keep those $\Gamma$ for which this path $\gamma_w$ is not primitive in $\pi_1(\Gamma,\ast)$ and take the minimum of the ranks of all such $\pi_1(\Gamma,\ast)$. That minimum equals $\pi(w)$ and the $A$-graphs $\Gamma$ that realize this minimum give us elements of $Crit(w)$. Note, however, that this algorithm sheds no light on the behavior of $\pi(w)$ and $Crit(w)$ for "random" cyclically reduced elements $w\in F_r$. Therefore we use rather different considerations in order to prove Theorem~\ref{thm:main}. The key tools there are provided by the "graph non-readability" conditions for generic elements of $F_r$ obtained by Arzhantseva and Ol'shanskii in \cite{AO,A1}, see Section~\ref{sect:AO} below. These genericity conditions also play a key role in the isomorphism rigidity results for "random" one-relator groups obtained in \cite{KS05,KSS06}.

The concept of the primitivity rank is dual to the notion of "primitivity index" for elements of free groups introduced by Gupta and Kapovich in \cite{GK19}. For $1\ne g\in F_r$ the \emph{primitivity index} $d_{prim}(g, F_r)$ of $g$ in $F_r$ is the smallest index of a subgroup $H\le F_r$ such that $H$ contains $g$ as a primitive element. It is shown in \cite{GK19} that $d_{prim}(g, F_r)$ is always finite and moreover $d_{prim}(g, F_r)\le ||g||_A$, where $||g||_A$ is the cyclically reduced length of $g$ in $F(A)$. One of the main results of \cite{GK19} shows that for a "random" freely reduced $w_n\in F(A)$ of length $n$ the primitivity index $d_{prim}(w_n,F_r)$ is $\ge const \log^{1/3} n$. Thus we see that the primitivity index and primitivity rank of generic elements in $F_r$ exhibit rather different quantitative behavior. However, a certain type of duality in these results is still preserved. They show that for a long "generic" $w\in F_r$ it is "hard" for $w$ to be non-primitive in a subgroup of small rank containing $w$ and it is "hard" for $w$ to be primitive in a subgroup of small index in $F_r$ containing $w$. 

As part (3) of Theorem~\ref{thm:main} shows, we can interpret genericity in the context of this theorem in terms of the element $W_n\in F_r=F(A)$ being produced by a simple non-backtracking random walk of length $n$ on $F_r=F(A)$. It would be interesting to understand if the conclusion of Theorem~\ref{thm:main} holds for other types of random walks on $F_r$. For example, let $\mu: F_r\to [0,1]$ be a discrete probability measure with finite support which generates $F_r$. Let $W_n=s_1\dots s_n \in F_r$ be obtained by a random walk of length $n$ on $F_r$ defined by $\mu$ (where the increments $s_1, s_2, ..$ are chosen using an i.i.d. sequence of random variables, each with distribution $\mu$). Is it then true that with probability tending to $1$ as $n\to\infty$ we have $\pi(W_n)=r$ and $Crit(W_n)=\{F_r\}$? We suspect that the answer should be positive but proving this fact would require establishing a suitable version of Proposition~\ref{prop:AO} below for $W_n$. The original proof of Proposition~\ref{prop:AO} by Arzhantseva and Ol'shanskii deployed heavy duty counting arguments relying on certain entropy lowering considerations that were specific to the simple non-backtracking random walk context. Thus a new approach would be required for further generalizations. 

We are grateful to Doron Puder for bringing to our attention his result from \cite{Pu15} that generic elements of $F_r$ have primitivity rank $r$ and for pointing out that the primitivity rank plays a key role in the work of Louder and Wilton~\cite{LW18,LW21}. We also thank Henry Wilton for pointing us to the work of Cashen and Hoffmann~\cite{CH20}. 

\section{Preliminaries}

\subsection{$A$-graphs.}

For the remainder of this paper, unless specified otherwise, let $r\ge 2$ be an integer, $A=\{a_1,\dots,a_r\}$ and let $F_r=F(A)$ be the free group of rank $r$ with the free basis $A$.

We adopt the same language and convention regarding Stallings subgroup graphs for $F_r$ and $A$-graphs more generally as in \cite{KM02}. Thus an \emph{$A$-graph} is a labelled directed graph $\Gamma$ where every oriented edge $e$ is given a label $\mu(e)\in A^{\pm 1}$ satisfying $\mu(e^{-1})=(\mu(e))^{-1}$.  An edge-path $\gamma$ in $\Gamma$ is then naturally assigned a label $\mu(\gamma)$ which is a word in the alphabet $A^{\pm 1}$. An $A$-graph $\Gamma$ is \emph{folded} if whenever $e_1,e_2$ are two distinct oriented edges of $\Gamma$ with the same initial vertex $o(e_1)=o(e_2)$ then $\mu(e_1)\ne \mu(e_2)$. Thus every vertex in a folded $A$-graph has degree $\le 2r$.

A connected $A$-graph $\Gamma$ with a base-vertex $\ast$ is a \emph{core graph} with respect to $\ast$ if $\Gamma$ equals to the union of all non-backtracking closed edge-paths from $\ast$ to $\ast$. A connected $A$-graph $\Gamma$ is a \emph{core graph} if it is a core graph with respect to each of its vertices.

For a finite $A$-graph $\Gamma$ we denote by $vol(\Gamma)$ the number of topological edges of $\Gamma$ (where for every oriented edge $e$ of $\Gamma$ the unordered pair $e,e^{-1}$ counts as a single topological edge).

Every subgroup $H\le F_r$ is uniquely represented by its \emph{Stallings subgroup graph} $\Gamma_H$, which is a connected folded $A$-graph with a base-vertex $\ast$ such that $\Gamma_H$ is a core graph with respect to $\ast$ and that a freely reduced word $w\in F(A)$ belongs to $H$ if and only if $w$ labels a closed path from $\ast$ to $\ast$ in $\Gamma_H$. In this case the labeling map $\mu$ provides a natural isomorphism between $\pi_1(\Gamma_H,\ast)$ and $H$. A subgroup $H\le F(A)$ is finitely generated if and only if $\Gamma_H$ is finite. Moreover, $H$ has finite index in $F(A)$ if and only if every vertex of $\Gamma_H$ has degree $2r$, and in this case the index $[F(A):H]$ is equal to the number of vertices in $\Gamma_H$. 

For $H=F_r$ the corresponding Stallings subgroup graph is the \emph{$A$-rose} $R_A$, which is a wedge of $r$-loop edges, labelled $a_1,\dots, a_r$, wedged at a single vertex $x_0$. For an arbitrary $H\le F(A)$ one can obtain $\Gamma_H$ by first taking the covering space $(\hat R_A, \ast)$ of $(R_A,x_0)$ corresponding to $H\le \pi_1(R_A,x_0)$ and then taking the topological core of this covering that is the smallest connected subgraph of $\hat R_A$ containing $\ast$ whose inclusion into $\hat R_A$ is a homotopy equivalence. 

We refer the reader to ~\cite{St83,KM02,KS05} for additional background on $A$-graphs and Stallings subgroup graphs.

For an element $g\in F_r$ we denote by $|g|_A$ the freely reduced length of $g$ with respect to $A$ and we denote by $||g||_A$ the cyclically reduced length of $g$ with respect to $A$. For a word $w$ over $A^{\pm 1}$ (not necessarily freely reduced) we denote by $|w|$ the length of $w$. Similarly, for an edge-path $\gamma$ in an $A$-graph $\Gamma$, we denote by $|\gamma$ the length of $\gamma$. 

Let $\Gamma$ be a finite connected $A$-graph with a base-vertex $\ast$. The \emph{maximal arcs} in $\Gamma$ are closures of the connected components of the space obtained by removing from $\Gamma$ all vertices of degree $\ge 3$ and the vertex $\ast$. It's easy to check that if $\Gamma$ is a finite connected $A$-graph with $\pi_1(\Gamma)$ of rank $m$ then $\Gamma$ has $\le 3m$ maximal arcs.

\subsection{Genericity}\label{sect:generic}

The definitions in this section follow \cite{KMSS03, KS05}. See \cite{Ka20} and \cite[Ch. 1]{Gagta}  for more general treatment of genericity. 

For a sequence $(x_n)_{n\ge n_0}$ of real numbers and $x\in \mathbb R$ we say that $\lim_{n\to \infty} x_n=x$ \emph{with exponentially fast convergence} if there exist constants $C>0$ and $0<\sigma<1$ such that for all $n\ge n_0$ we have 
\[
 |x_n-x|\le C\sigma^n.
\]

Let $S\subseteq F(A)$. For $n\ge 0$ we denote by $\rho_n(S)$ the number of elements $w\in F(A)$ with $|w|_A\le n$ and we denote by $\gamma_n(S)$ the number of elements $w\in F(A)$ with $|w|_A= n$.

\begin{defn}[Exponentially generic subsets]
Let $Z\subseteq F_r$ be a subset such that there is $n_0\ge 1$ such that for all $n\ge n_0$ we have $\rho_n(S)\ne 0$.
\begin{enumerate}
\item A subset $S\subseteq Z$ is \emph{exponentially negligible} in $Z$ if 
\[
\lim_{n\to\infty} \frac{\rho_n(S)}{\rho_n(Z)}=0
\]
and the convergence in this limit is exponentially fast. 

\item A subset $S\subseteq Z$ is \emph{exponentially generic} in $Z$ if $Z\setminus S$ is exponentially negligible in $Z$.
\end{enumerate}
\end{defn}

Recall that $U_r$ denotes the set of all cyclically reduced words in $F_r=F(A)$.

Both $n$-balls and $n$-spheres in $F_r$ and $U_r$ grow like $(2r-1)^n$. This fact has the following useful consequences, see \cite[Lemma 6.1]{KSS06}

\begin{prop}\label{prop:use} The following hold:
\begin{enumerate}
\item[(a)] Let $S\subseteq F_r$. Then $S$ is exponentially negligible in $F_r$ if and only if $\lim_{n\to\infty} \frac{\gamma_n(S)}{(2r-1)^n}=0$ with exponentially fast convergence.
\item[(b)] Let $S\subseteq U_r$. Then $S$ is exponentially negligible in $U_r$ if and only if $\lim_{n\to\infty} \frac{\gamma_n(S)}{(2r-1)^n}=0$ with exponentially fast convergence.
\item[(c)]  Let $S\subseteq F_r$. Then $S$ is exponentially generic in $F_r$ if and only if $\lim_{n\to\infty} \frac{\gamma_n(S)}{\gamma_n(F_r)}=1$ with exponentially fast convergence.
\item[(d)]  Let $S\subseteq U_r$. Then $S$ is exponentially generic in $F_r$ if and only if $\lim_{n\to\infty} \frac{\gamma_n(S)}{\gamma_n(U_r)}=1$ with exponentially fast convergence.
\end{enumerate}
\end{prop}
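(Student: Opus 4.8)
The plan is to reduce all four statements to two elementary growth facts together with the cumulative identity $\rho_n(S) = \sum_{k=0}^n \gamma_k(S)$, valid for every $S$. The growth facts are that, for $n\ge 1$, $\gamma_n(F_r) = 2r(2r-1)^{n-1}$ and $\gamma_n(U_r) = (2r-1)^n + r + (r-1)(-1)^n$ (the latter computed as $\mathrm{tr}(M^n)$, where $M = J - P$ is the $2r\times 2r$ adjacency matrix encoding reduced transitions between the letters of $A^{\pm 1}$). Consequently both sphere counts satisfy $\gamma_n(Z)/(2r-1)^n \to c_Z$ for a positive constant ($c_{F_r} = 2r/(2r-1)$, $c_{U_r} = 1$) with error decaying like $(2r-1)^{-n}$, i.e. exponentially fast; summing the geometric series then gives $\rho_n(Z) = \Theta((2r-1)^n)$ for $Z \in \{F_r, U_r\}$, with exponentially controlled relative error and a positive lower bound for all large $n$. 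Everything below is bookkeeping built on these.

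For parts (a) and (b), fix $Z \in \{F_r, U_r\}$ and $S \subseteq Z$. For the forward direction I use $\gamma_n(S) \le \rho_n(S)$: if $\rho_n(S)/\rho_n(Z) \le C\sigma^n$, then $\gamma_n(S) \le C\sigma^n \rho_n(Z) \le C' (\sigma(2r-1))^n$, so $\gamma_n(S)/(2r-1)^n \le C'\sigma^n \to 0$ exponentially. For the converse, suppose $\gamma_k(S) \le C(\sigma(2r-1))^k$; summing gives $\rho_n(S) \le C \sum_{k=0}^n (\sigma(2r-1))^k$. Here I split cases: if $\sigma(2r-1) > 1$ the partial sum is $\Theta$ of its last term $(\sigma(2r-1))^n$, while if $\sigma(2r-1) \le 1$ it is $O(n)$; in either case $\rho_n(S)/\rho_n(Z) \le \rho_n(S)/(c(2r-1)^n)$ decays exponentially. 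This establishes the equivalence of exponential negligibility with $\gamma_n(S)/(2r-1)^n \to 0$ exponentially fast, simultaneously for $Z = F_r$ (part (a)) and $Z = U_r$ (part (b)).

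For parts (c) and (d), recall that genericity of $S$ in $Z$ means negligibility of $Z \setminus S$ in $Z$, and that for $S \subseteq Z$ one has $\gamma_n(Z \setminus S) = \gamma_n(Z) - \gamma_n(S)$. Applying part (a) (resp. (b)) to the complement, $S$ is generic in $Z$ iff $\gamma_n(Z \setminus S)/(2r-1)^n = \gamma_n(Z)/(2r-1)^n - \gamma_n(S)/(2r-1)^n \to 0$ exponentially fast. Since $\gamma_n(Z)/(2r-1)^n \to c_Z$ exponentially fast, this is equivalent to $\gamma_n(S)/(2r-1)^n \to c_Z$ exponentially fast; dividing by the exponentially convergent, eventually bounded-below sequence $\gamma_n(Z)/(2r-1)^n$ turns this into $\gamma_n(S)/\gamma_n(Z) \to 1$ exponentially fast. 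For $Z = F_r$ this is part (c), and for $Z = U_r$ this is part (d), the relevant ambient set for a subset of $U_r$ being $U_r$.

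The argument has no genuine obstacle; it is the asymptotic exercise recorded as \cite[Lemma 6.1]{KSS06}. The only points requiring care are keeping the constants $C, \sigma$ uniform so that \emph{exponentially fast} is preserved in each direction of each equivalence, and the case distinction $\sigma(2r-1) \gtrless 1$ in the summation step, since the dominant behavior of the partial geometric sums changes across that threshold; all divisions by $\gamma_n(Z)/(2r-1)^n$ and by $\rho_n(Z)$ are legitimate because these quantities are bounded below by positive constants for all large $n$.
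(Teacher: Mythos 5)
Your proof is correct. Note that the paper itself does not prove this proposition at all --- it simply cites \cite[Lemma 6.1]{KSS06} --- so your self-contained argument supplies exactly the counting that the citation stands in for, and every step checks out: the sphere counts are right ($\gamma_n(F_r)=2r(2r-1)^{n-1}$, and your transfer-matrix computation $\gamma_n(U_r)=\mathrm{tr}(M^n)=(2r-1)^n+r+(r-1)(-1)^n$ is verified by the eigenvalues $2r-1$, $1$ (multiplicity $r$), $-1$ (multiplicity $r-1$) of $M=J-P$, and agrees with $\gamma_1(U_r)=2r$, $\gamma_2(U_r)=2r(2r-1)$); the passage between $\rho_n$ and $\gamma_n$ via geometric summation, including the case split at $\sigma(2r-1)\gtrless 1$, is handled properly; and the division by $\gamma_n(Z)/(2r-1)^n$ in parts (c)--(d) is legitimate since that ratio is bounded below for large $n$. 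Two small remarks. First, you silently did the right thing with part (d): as literally stated it says ``exponentially generic in $F_r$,'' which is a typo in the paper (a subset of $U_r$ can never be exponentially generic in $F_r$, since $\gamma_n(U_r)/\gamma_n(F_r)\to (2r-1)/2r<1$); your reading of the ambient set as $U_r$ is the intended one. Second, your exact eigenvalue formula for $\gamma_n(U_r)$ is more precise than needed --- the two-sided bound $(2r-1)^n\le \gamma_n(U_r)\le \gamma_n(F_r)$ plus the exact formula for $\gamma_n(F_r)$ would already give $\rho_n(U_r)=\Theta((2r-1)^n)$ and exponential convergence of $\gamma_n(U_r)/(2r-1)^n$ --- but the exact computation costs nothing and makes the constants $c_Z$ transparent.
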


We also need the following statement, see \cite[Proposition 6.2]{KSS06}

\begin{propdfn}\label{circ}
Let $Z\subseteq U_r$. Let $Z^\circ$ denote the set of all freely reduced words in $F_r$ whose cyclically reduced forms belong to $Z$.

\begin{enumerate}
\item If $Z$ is exponentially negligible in $U_r$ then $Z^\circ$ is exponentially negligible in $F_r$.
\item If $Z$ is exponentially generic in $U_r$ then $Z^\circ$ is exponentially generic in $F_r$.
\end{enumerate}
\end{propdfn}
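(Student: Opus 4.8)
The two parts are not independent: part (2) will follow from part (1) by complementation, so the plan is to prove the negligibility statement (1) first and then deduce (2). For part (1), the starting point is the standard unique decomposition of a nontrivial freely reduced word $w\in F_r$ as a freely reduced product $w = ucu^{-1}$, where $c$ is cyclically reduced and nonempty and $u$ is freely reduced. Here $|w|_A = 2|u| + ||w||_A$ with $||w||_A = |c|$, and $c$ is exactly the cyclically reduced form of $w$; thus $w\in Z^\circ$ precisely when its core $c$ lies in $Z$. For a fixed core $c$ of length $m\ge 1$ and a fixed tail length $k$, the number of admissible tails $u$ (those for which $ucu^{-1}$ stays freely reduced) is at most the total number of freely reduced words of length $k$, namely $2r(2r-1)^{k-1}\le 2(2r-1)^k$ for $r\ge 2$. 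This uniform upper bound is the only combinatorial input I need, and I would deliberately avoid computing the admissible-tail count exactly.

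Next I would organize $\gamma_n(Z^\circ)$ according to this decomposition, grouping freely reduced words of length $n$ by their core length $m = n-2k$. Since the trivial core contributes nothing for $n\ge 1$, this gives
\[
\gamma_n(Z^\circ) \le \sum_{k\ge 0} \gamma_{n-2k}(Z)\cdot 2(2r-1)^k.
\]
Because $Z$ is exponentially negligible in $U_r$, Proposition~\ref{prop:use}(b) furnishes constants $C>0$ and $0<\sigma<1$ with $\gamma_m(Z)\le C\sigma^m(2r-1)^m$ for all $m$. Substituting this and re-indexing by $m=n-2k$, the estimate collapses to
\[
\frac{\gamma_n(Z^\circ)}{(2r-1)^n} \le 2C\,(2r-1)^{-n/2}\sum_{m=0}^{n}\bigl(\sigma\sqrt{2r-1}\bigr)^m.
\]

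The main (and essentially the only) delicate point is the behavior of this geometric sum, which hinges on whether $\alpha:=\sigma\sqrt{2r-1}$ is smaller or larger than $1$. If $\alpha<1$, the sum is bounded by a constant and the right-hand side decays like $(2r-1)^{-n/2}$; if $\alpha\ge 1$, the sum is dominated by its top term $\alpha^n$, and the prefactor $(2r-1)^{-n/2}$ exactly cancels the $\sqrt{2r-1}$ inside $\alpha^n$, leaving a bound of order $(n+1)\sigma^n$. I expect this case split to be where the care is required, since naively one might fear that a very fast decay rate $\sigma$ (small) could interact badly with the $(2r-1)^k$ growth of the number of tails. In either case, however, one obtains
\[
\frac{\gamma_n(Z^\circ)}{(2r-1)^n} \le 2C\,(n+1)\,\max\bigl((2r-1)^{-1/2},\sigma\bigr)^n,
\]
and since both $\sigma<1$ and $(2r-1)^{-1/2}<1$, the polynomial factor is harmless and the quantity tends to $0$ exponentially fast. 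By Proposition~\ref{prop:use}(a) this shows that $Z^\circ$ is exponentially negligible in $F_r$, completing part (1).

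Finally, for part (2) I would note that passing to cores partitions $F_r$ as $F_r = Z^\circ \sqcup (U_r\setminus Z)^\circ$, because every freely reduced word has a unique cyclically reduced form, lying either in $Z$ or in $U_r\setminus Z$; hence $F_r\setminus Z^\circ = (U_r\setminus Z)^\circ$. If $Z$ is exponentially generic in $U_r$, then $U_r\setminus Z$ is exponentially negligible in $U_r$, so applying part (1) to $U_r\setminus Z$ shows that $(U_r\setminus Z)^\circ = F_r\setminus Z^\circ$ is exponentially negligible in $F_r$, which is exactly the statement that $Z^\circ$ is exponentially generic in $F_r$.
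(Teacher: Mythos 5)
Your proof is correct. The paper itself gives no proof of this statement, deferring to \cite[Proposition~6.2]{KSS06}, and your argument --- the unique decomposition $w=ucu^{-1}$ with cyclically reduced core $c$, the crude bound $2(2r-1)^k$ on conjugating tails, and the geometric-sum estimate with the case split on $\sigma\sqrt{2r-1}$ (plus complementation via $F_r\setminus Z^\circ=(U_r\setminus Z)^\circ$ for part (2)) --- is essentially the same standard counting argument used there.
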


\section{The genericity condition}\label{sect:AO}

Recall that $r\ge 2$ is fixed, $A=\{a_1,\dots, a_r\}$ and $F_r=F(A)$. We will need the following genericity condition introduced by Arzhantseva and Ol'shanskii in \cite{AO}.

\begin{defn}\cite{AO}
  Let $0< \mu\le 1$ be a real number. A nontrivial freely reduced word $w$ in
  $F(A)=F(a_1,\dots, a_r)$  is called
  \emph{$\mu$-readable} if there exists a connected folded $A$-graph
  $\Gamma$ such that:

\begin{enumerate}
\item The number of edges in $\Gamma$ is at most $\mu |w|$.
\item The free group $\pi_1(\Gamma)$ has rank at most $r-1$.
\item There exists a reduced path in $\Gamma$ with label $w$.
\end{enumerate}
\end{defn}

\begin{defn}\cite{A1}
  Let $0< \mu\le 1$ be a real number and let $L\ge 2$ be an integer. A
  nontrivial freely reduced word $w$ in $F(A)$ is called
  \emph{$(\mu,L)$-readable} if there exists a connected folded
  $A$-graph $\Gamma$ such that:

\begin{enumerate}
\item The number of edges in $\Gamma$ is at most $\mu |w|$.
\item The free group $\pi_1(\Gamma)$ has rank at most $L$.
\item There is a path in $\Gamma$ with label $w$.
\item The graph $\Gamma$ has at least one vertex of degree $< 2r$.
\end{enumerate}
\end{defn}

\begin{defn}\label{LML}
  Let $0< \mu\le 1$ be a real number, let $L\ge 2$ be an integer and let $0<\lambda<1$ be a real number such that 
  \[
  \lambda
  \le \frac{\mu }{15L+3\mu}\le \frac{\mu }{15r+3\mu}< 1/6.
  \] 
  We will
  say that a cyclically reduced word $w$ in $F(A)$ satisfies the \emph{$(\lambda, \mu, L)$-condition}
  if:

\begin{enumerate}
\item The (symmetrized closure) of the word $w$ satisfies the $C'(\lambda)$ small
  cancellation condition.
\item The word $w$ is not a proper power in $F(A)$.
\item If $w'$ is a subword of a cyclic permutation of $w$ and
  $|w'|\ge |w_i|/2$ then $w'$ is not 
  $\mu$-readable and not $(\mu,L)$-readable.
\end{enumerate}

We denote by $P(\lambda, \mu,L)$ the set of all cyclically reduced words $w\in F_r$ satisfying the $(\lambda,\mu,L)$-condition.

\end{defn}
We refer the reader to \cite{LS} for the basic definitions and
background information regarding small cancellation theory.

Note that (3) in Definition~\ref{LML} implies that if $w\in P(\lambda, \mu,L)$ then $w$ is not 
  $\mu$-readable and not $(\mu,L)$-readable.

The results of Arzhantseva and Ol'shanskii~\cite{AO,A1} impliy the following result (c.f. \cite[Theorem 4.6]{KS05}).

\begin{prop}\label{prop:AO}
Let $r\ge 2$ and $F_r=F(a_1,\dots, a_r)$. Let $\lambda, \mu, L$ be
  as in Definition~\ref{LML}.

  Then the set $P(\lambda, \mu, L)$ is exponentially generic in $U_r$.
\end{prop}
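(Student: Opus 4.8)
The plan is to deduce Proposition~\ref{prop:AO} by combining two genericity statements of Arzhantseva and Ol'shanskii, one controlling the small cancellation and non-proper-power conditions (1) and (2), and one controlling the non-readability condition (3), and then using the fact that a finite intersection of exponentially generic sets is exponentially generic. First I would observe that the set of cyclically reduced $w$ whose symmetrized closure satisfies $C'(\lambda)$ is exponentially generic in $U_r$: for any fixed $\lambda>0$ a random cyclically reduced word has no long common subwords (up to inversion and cyclic permutation) with its translates, and the proportion of words of length $n$ failing this decays exponentially in $n$. Likewise the set of cyclically reduced words that are proper powers is exponentially negligible in $U_r$, since a proper power of length $n$ is determined by a root of length a proper divisor of $n$, giving at most $\sum_{d\mid n,\,d<n}(2r-1)^d = O((2r-1)^{n/2})$ such words, which is negligible against the $\sim(2r-1)^n$ growth guaranteed by Proposition~\ref{prop:use}(b).

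The heart of the matter is condition (3), and here I would invoke the Arzhantseva--Ol'shanskii non-readability theorems \cite{AO,A1} directly. Their results assert that, for $\lambda,\mu,L$ satisfying the numerical constraint in Definition~\ref{LML}, the set of cyclically reduced words $w$ of length $n$ that are $\mu$-readable, or $(\mu,L)$-readable, is exponentially negligible. I would then upgrade this from "$w$ itself is readable" to "some long subword $w'$ of a cyclic permutation of $w$ with $|w'|\ge |w|/2$ is readable". The point is that there are only linearly many (in $|w|$) such subwords $w'$ of each cyclic permutation, and the readability count for words of a given length decays exponentially; summing the negligible contributions over the at most $|w|^2$ choices of subword and starting point multiplies an exponentially small quantity by only a polynomial factor, which remains exponentially small. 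Thus the set of $w$ violating (3) is exponentially negligible in $U_r$.

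Having established that each of the three defining conditions fails only on an exponentially negligible subset of $U_r$, I would conclude by taking the union of the three exceptional sets. A finite union of exponentially negligible sets is exponentially negligible: if $\rho_n(S_i)/\rho_n(U_r)\le C_i\sigma_i^n$ for $i=1,2,3$, then the sum is bounded by $3\max_i C_i\,(\max_i\sigma_i)^n$, still exponentially decaying. Hence $P(\lambda,\mu,L)$, being the complement in $U_r$ of this union, is exponentially generic in $U_r$, as claimed.

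The main obstacle I expect is condition (3), specifically the passage from readability of $w$ to readability of its long subwords, and the delicate fact that Arzhantseva and Ol'shanskii's counting arguments yield exponential decay with a rate strong enough to survive multiplication by the polynomial number of subwords. This is precisely why the numerical hypothesis $\lambda\le \mu/(15L+3\mu)\le \mu/(15r+3\mu)<1/6$ is imposed: it guarantees both that the $C'(\lambda)$ condition is compatible with the readability thresholds and that the entropy-lowering estimates in \cite{AO,A1} apply with a uniform exponential rate. I would therefore lean on those references for the precise decay estimate rather than re-deriving the counting bound here, and merely verify that the polynomial overhead from summing over subwords does not spoil exponential negligibility.
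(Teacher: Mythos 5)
Your proposal is correct and takes essentially the same route as the paper, which offers no proof of its own: it states the proposition as a direct consequence of the Arzhantseva--Ol'shanskii results \cite{AO,A1} (cf.\ \cite[Theorem 4.6]{KS05}), exactly the references on which you lean for the key exponential non-readability estimate, with the remaining ingredients ($C'(\lambda)$ genericity, negligibility of proper powers, finite unions of negligible sets) being the same standard arguments the paper alludes to in its remark. The one place you do extra work --- upgrading non-readability of $w$ to non-readability of long subwords of cyclic permutations by a polynomial-times-exponential count --- is sound but unnecessary, since, as the paper notes, Arzhantseva and Ol'shanskii established the $(\lambda,\mu,L)$-condition's genericity in precisely this stronger form, subword clause included.
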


\begin{rem}
Note that it is fairly easy to see that for any $\lambda\in (0,1)$ the set all of non-proper power words satisfying $C'(\lambda)$ is exponentially generic in $U_r$. The main substance and power of Proposition~\ref{prop:AO} consists in verifying genericity of the non-readability conditions.

Note also that we don't actually need the full strength of the $(\lambda, \mu, L)$-condition for the proofs in this paper. In particular, we don't use the "not a proper power" assumption on $w$. Also, in part (3) of Definition~\ref{LML} we only really need non-readability of $w$ itself rather than of its sufficiently long subwords. Nevertheless, we state the $(\lambda, \mu, L)$-condition in its original stronger form, as it was defined by Arzhantseva and Ol'shanskii, for which they established Proposition~\ref{prop:AO}.
\end{rem}

We also need the following slightly more restricted version of the set $P(\lambda, \mu, L)$.

\begin{defn}
Let $r\ge 2$ and $F_r=F(a_1,\dots, a_r)$. Let $\lambda, \mu, L$ be
  as in Definition~\ref{LML}.

Denote by $P'(\lambda, \mu, L)$ the set of all words $w\in P(\lambda, \mu, L)$ such that every freely reduced word of length 2 in $F_r$ occurs as a subword of $w$. 
\end{defn}

\begin{prop}\label{prop:AOm}
Let $r\ge 2$ and $F_r=F(a_1,\dots, a_r)$. Let $\lambda, \mu, L$ be
  as in Definition~\ref{LML}.

  Then the set $P'(\lambda, \mu, L)$ is exponentially generic in $U_r$.
\end{prop}

\begin{proof}
For a given freely reduced word $z$ of length 2, the set of all words in $U_r$ containing $z$ as a subword is exponentially generic in $U_r$; see, for example~\cite[Proposition~6.3]{KSS06}.
Since the intersection of finitely many exponentially generic subsets of $U_r$ is again exponentially generic in $U_r$, the conclusion of the proposition now follows from Proposition~\ref{prop:AO}.
\end{proof} 

\section{Primitivity rank of generic elements}

\begin{thm}\label{thm:primrank}
Let $r\ge 2$. Let $0<\lambda, \mu<1$ be such that 
\[
  \lambda
  \le \frac{\mu }{15L+3\mu}\le \frac{\mu }{15r+3\mu}< 1/6
\] 
and that
\[
\lambda<\frac{\mu}{3r}.
\]
Then for every nontrivial cyclically reduced word $w\in P'(\lambda, \mu,r)$ we have 
\[
\pi(w)=r \quad \text{ and } \quad Crit(w)=\{F_r\}.
\]
\end{thm}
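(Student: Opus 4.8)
## Proof Plan

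The plan is to show that the $(\lambda,\mu,r)$-condition, together with the requirement that every length-$2$ word appear as a subword, forces any witness to non-primitivity of $w$ in a subgroup $H$ of rank $\le r-1$ to be "too small" relative to $|w|$, contradicting the non-readability clauses of Definition~\ref{LML}. I would set up the argument using the Stallings-graph machinery: suppose for contradiction that $\pi(w)\le r-1$, so there is a subgroup $H\le F_r$ with $rank(H)\le r-1$, $w\in H$, and $w$ not primitive in $H$. The goal is to manufacture from $H$ a folded $A$-graph $\Gamma$ that violates $\mu$-readability (or $(\mu,r)$-readability) of $w$, i.e.\ a graph with $vol(\Gamma)\le \mu|w|$, with $\pi_1$ of rank $\le r-1$, and through which $w$ reads as a path. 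The first step is therefore to pass from $H$ to its Stallings graph $\Gamma_H$, note $w$ labels a closed path at the basepoint, and observe that $rank(\pi_1(\Gamma_H))=rank(H)\le r-1$, so clauses (2) of both readability definitions are already satisfied.

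The crux is the volume bound: I must show $vol(\Gamma_H)\le \mu|w|$ so that clause (1) of $\mu$-readability applies and yields the contradiction. Here I would exploit the $C'(\lambda)$ small-cancellation hypothesis on $w$ via the structure of the closed path $\gamma_w$ in $\Gamma_H$. The idea is that if $\Gamma_H$ were "large," the reduced closed path labeled $w$ would have to traverse long arcs repeatedly, and the small-cancellation condition $\lambda<\mu/(3r)$ controls how much overlap between translates of $w$ (equivalently, how much the path can fold onto itself) is permitted. Concretely, I would bound the number of edges of $\Gamma_H$ in terms of the number of maximal arcs (at most $3\,rank(\pi_1(\Gamma_H))\le 3(r-1)<3r$ by the arc-counting fact in the Preliminaries) and the maximal arc length, using $C'(\lambda)$ to prevent any single arc from being traversed too often without creating a long common subword that would violate the small-cancellation bound. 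This should give $vol(\Gamma_H)\le 3r\lambda|w|<\mu|w|$, precisely the arithmetic packaged in the extra hypothesis $\lambda<\mu/(3r)$.

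The remaining subtlety is the case distinction between plain $\mu$-readability and $(\mu,r)$-readability, which is governed by whether $\Gamma_H$ has a vertex of degree $<2r$. If $rank(H)\le r-1$ and $H$ has infinite index, then $\Gamma_H$ has a vertex of degree $<2r$, so clause (4) of the $(\mu,r)$-readability definition holds and that condition is contradicted; if instead one worries about finite-index subgroups, the rank constraint $rank(H)\le r-1<r$ already rules out finite index in $F_r$ (a finite-index subgroup of $F_r$ has rank $\ge r$), so $\Gamma_H$ necessarily has a vertex of degree $<2r$ and we are always in the $(\mu,r)$-readable regime, contradicting clause (3) of the $(\lambda,\mu,r)$-condition. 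This establishes $\pi(w)\ge r$, and combined with the trivial bound $\pi(w)\le r$ for non-primitive $w$ we get $\pi(w)=r$. The reason every length-$2$ word must occur as a subword of $w$ (the defining feature of $P'$ versus $P$) is to pin down $Crit(w)=\{F_r\}$: I would argue that any rank-$r$ subgroup $H$ containing $w$ as a non-primitive element must have $\Gamma_H$ with every vertex of full degree $2r$ (else the non-readability argument above applies again at rank $r-1$ after a fold), forcing $H$ to be finite index; the presence of all length-$2$ subwords then forces $\Gamma_H=R_A$ and hence $H=F_r$.

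The hard part will be the volume estimate in the second paragraph: making precise how the $C'(\lambda)$ condition on $w$ translates into a quantitative bound on $vol(\Gamma_H)$ through the fact that a reduced closed path labeled $w$ cannot fold onto itself along overlaps longer than $\lambda|w|$ without violating small cancellation. Getting the constants to line up so that exactly the stated hypotheses $\lambda\le\mu/(15r+3\mu)$ and $\lambda<\mu/(3r)$ suffice is where the real work lies, and where I expect the interplay with the maximal-arc count and the non-readability definitions to be most delicate.
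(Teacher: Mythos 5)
The central gap is your volume bound, which runs in the wrong direction. You propose to prove $vol(\Gamma_H)\le 3r\lambda|w|<\mu|w|$ from $C'(\lambda)$ alone, on the grounds that small cancellation prevents arcs from being traversed ``too often.'' But $C'(\lambda)$ only constrains arcs traversed \emph{at least twice}: a maximal arc of length $>\lambda|w|$ traversed exactly once by $\gamma_w$ produces no repeated subword of the cyclic word $w$ and hence no small-cancellation violation, while still contributing its full length to the volume. The only bound your mechanism yields is $vol(\Gamma_H)\le 3r\lambda|w|+|w|$, which is useless since $\mu<1$. Note also that your argument never uses the hypothesis that $w$ is \emph{non-primitive} in $H$, and readability alone cannot produce a contradiction for large graphs: $w$ is readable in the rank-one graph $\Gamma_{\langle w\rangle}$ of volume $|w|$. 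The paper's proof runs the dichotomy the other way around. Non-readability (clause (3) of the $(\lambda,\mu,r)$-condition) forces $vol(\Gamma_H)>\mu|w|$; since $\Gamma_H$ has at most $3m\le 3r$ maximal arcs, the longest arc $\alpha$ satisfies $|\alpha|\ge \mu|w|/(3r)>\lambda|w|$; the path $\gamma_w$ crosses every edge of $\Gamma_H$ (this follows from minimality of the rank of $H\in Crit(w)$ --- a fact you also need for ``crossed at least once'' but never establish), and $C'(\lambda)$ forbids crossing $\alpha$ twice, so $\gamma_w$ crosses $\alpha$ exactly once; a closed path crossing a (necessarily non-separating) arc exactly once represents a \emph{primitive} element of $\pi_1(\Gamma_H,\ast)$, contradicting the choice of $H$. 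This single-crossing-implies-primitive mechanism is the key idea your proposal lacks; with it, your bound $vol(\Gamma_H)\le 3r\lambda|w|$ does become true by contraposition, but it cannot be obtained before and without it.

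A secondary gap: you never prove that $w$ is non-primitive in $F_r$, which is needed both for $\pi(w)\le r$ (you invoke ``the trivial bound $\pi(w)\le r$ for non-primitive $w$'' without verifying non-primitivity --- otherwise $\pi(w)=\infty$ is possible) and for $F_r\in Crit(w)$ in the second half of the statement. This is exactly where the defining feature of $P'$ versus $P$ enters the paper's proof: since $w$ contains every freely reduced word of length $2$ as a subword, the Whitehead graph of the cyclic word of $w$ is complete, so $w$ is not primitive in $F_r$ by Whitehead's lemma. You instead spend the length-$2$ condition on forcing $\Gamma_H=R_A$ in a finite-index case that never arises: as you yourself observe for rank $\le r-1$, the Nielsen--Schreier formula $rank(H)=1+[F_r:H](r-1)$ shows a proper finite-index subgroup of $F_r$ has rank $>r$, so any $H\in Crit(w)$ with $H\ne F_r$ and $rank(H)=r$ automatically has infinite index, $\Gamma_H$ has a vertex of degree $<2r$, and the analysis proceeds exactly as in the rank-$<r$ case via $(\mu,r)$-readability together with the same long-arc primitivity argument.
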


\begin{proof}

Let $1\ne w\in P'(\lambda, \mu,r)$ where $\lambda,\mu$ are as in the assumptions of the theorem.

a) Note first that $w$ is not primitive in $F_r$ because $w$ contains all freely reduced words in $F_r=F(A)$ of length 2 as subwords and hence the Whitehead graph of the cyclic word  corresponding to $w$ is complete~\cite{Wh36}. Hence $\pi(w)\ne \infty$ and so $\pi(w)\le r$. 

b) We claim that $\pi(w)=r$. Indeed, suppose not. Thus $\pi(w)<r$. Take a subgroup $H\in Crit(w)$. 

Thus $H\le F_r$ is a subgroup of the smallest possible rank containing $w$ as a non-primitive element and $rank(H)=\pi(w)<r$.

Let $\Gamma_H$ be the Stallings subgroup graph for $H$ and let $\ast$ be the base-vertex of $\Gamma_H$. Then the cyclically reduced word $w$ is readable along a closed path $\gamma_w$ from $\ast$ to $\ast$ in $\Gamma_H$. The minimality assumption on the rank of $H$ implies that $\gamma_w$ crosses every topological edge of $\Gamma_H$. Moreover, since $\Gamma_H$ is folded and $w$ is cyclically reduced, the vertex $\ast$ has degree $>1$ in $\Gamma_H$, that is $\Gamma_H$ is a folded connected finite core $A$-graph.

Put $m=rank(H)<r$.

The graph $\Gamma_H$ has $\le 3m$ maximal arcs (recall that we view $\ast$ as an endpoint of maximal arcs even if $\ast$ has degree 2 in $\Gamma_H$).

The case $vol(\Gamma_H)\le \mu |w|$ is impossible since $w$ is readable in the graph $\Gamma_H$ of rank $\le r-1$ and since by assumption $w$ satisfies the $(\lambda,\mu,r)$-condition.

Therefore $vol(\Gamma_H)\ge \mu |w|$. Let $\alpha$ be the longest maximal arc of $\Gamma_H$. Since $\Gamma_H$ has $\le 3m$ maximal arcs, we have 
\[
|\alpha|\ge vol(\Gamma_H)/3m\ge \mu |w|/3m\ge \mu |w|/3r >\lambda |w|.
\]

The path $\gamma_w$ crosses over the arc $\alpha$ (in some direction) at least once, and the $C'(\lambda)$ assumption on $w$ implies that it does so exactly once. Since $\gamma_w$ is a closed path at $\ast$ in $\Gamma_H$, it follows that $\alpha$ is a non-separating arc in $\Gamma_H$ and that $\gamma_w$ represents a primitive element in $\pi_1(\Gamma_H,\ast)$. Hence $w$ is primitive in $H$, yielding a contradiction with our assumption about $w$ and $H$.

Thus indeed $\pi(w)=r$, as claimed.

c) We now claim that $Crit(w)=\{F_r\}$.

We have already seen in a) that $w$ is not primitive in $F_r$ and we have just proved that $\pi(w)=r$, so that $F_r\in Crit(w)$.

Suppose that $Crit(w)\ne \{F_r\}$. Then there exists a subgroup $H\in Crit(w)$, such that $H\ne F_r$. Thus $H$ contains $w$ as a non-primitive element and $rank(H)=r$. Since $H\ne F_r$, it follows that $H$ has infinite index in $F_r$. 

Again, let $\Gamma_H$ be the Stallings subgroup graph for $H$ with base-vertex $\ast$. Since $w\in H$, there is a closed path from $\ast$ to $\ast$ labelled by $w$. As in b), we see that $\Gamma_H$ is a finite connected  folded core $A$-graph. Also, the minimality assumption on the rank of $H$ implies that $\gamma_w$ crosses every edge of $\Gamma_H$. Since $[F_r:H]=\infty$, the graph $\Gamma_H$ has a vertex of degree $<2r$.

Since $w$ is readable in $\Gamma_H$ but $w$ is not $(\mu,r)$-readable by our assumption that $w\in P'(\lambda,\mu,r)$, it follows that the case $vol(\Gamma_H)\le \mu|w|$ is impossible. Hence $vol(\Gamma_H)\ge \mu|w|$.

Since $\pi_1(\Gamma_H)$ has rank $r$, the  graph $\Gamma_H$ has $\le 3r$ maximal arcs. Let $\alpha$ be the longest maximal arc of $\Gamma_H$. Then 
\[
|\alpha|\ge vol(\Gamma_H)/3r\ge \mu |w|/3r>\lambda |w|.
\]

As in b), the path $\gamma_w$ crosses the arc $\alpha$ at least once in some direction, and the $C'(\lambda)$ condition implies that it crosses $\alpha$ exactly once. Since $\gamma_w$ is a closed path, it again follows that $\alpha$ is a non-separating arc in $\Gamma_H$ and hence $\gamma_w$ is primitive in $\pi_1(\Gamma_H,\ast)$. Therefore $w$ is primitive in $H$, yielding a contradiction with the choice of $H$.

Thus $|Crit(w)|=1$, as required.
\end{proof}

We can now prove Theorem~\ref{thm:main} from the Introduction:

\begin{thm}\label{thm:main1}
Let $r\ge 2$ be an integer and let $F_r=F(A)$, where $A=\{a_1,\dots, a_r\}$, be the free group of rank $r$. 

\begin{enumerate}

\item There exists a subset $Y_r\subseteq F_r$ such that $Y_r$ is exponentially generic in $F_r$ and that for every $w\in Y_r$ we have  $\pi(w)=r$ and $Crit(w)=\{F_r\}$. 

\item There exists a subset $Q_r\subseteq U_r$ such that  $Q_r$ is exponentially generic in $U_r$ and that for every $w\in Q_r$ we have  $\pi(w)=r$ and $Crit(w)=\{F_r\}$. 

\item Let $W_n\in F(A)$ be obtained by a simple non-backtracking random walk of length $n$. Then, with probability tending to $1$ exponentially fast as $n\to \infty$, the word $W_n$ has the property that $\pi(W_n)=r$ and $Crit(W_n)=\{F_r\}$.   
\end{enumerate}
\end{thm}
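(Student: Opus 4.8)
The plan is to deduce Theorem~\ref{thm:main1} from Theorem~\ref{thm:primrank} by feeding the latter through the genericity machinery assembled in Sections~\ref{sect:generic} and~\ref{sect:AO}. First I would fix a valid choice of parameters $\lambda,\mu$ satisfying the two hypotheses of Theorem~\ref{thm:primrank}, namely
\[
\lambda\le \frac{\mu}{15r+3\mu}<\frac16 \qquad\text{and}\qquad \lambda<\frac{\mu}{3r},
\]
taking $L=r$. It is worth checking that such a pair exists: fixing $\mu$ and shrinking $\lambda$ to be small enough satisfies both inequalities simultaneously, so the parameter region is nonempty. To keep the sets algorithmically recognizable (as promised in the remark following the theorem), I would take $\lambda,\mu$ to be rational.

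For part (2), I would simply set $Q_r:=P'(\lambda,\mu,r)$. By Proposition~\ref{prop:AOm} this set is exponentially generic in $U_r$, and by Theorem~\ref{thm:primrank} every $w\in Q_r$ has $\pi(w)=r$ and $Crit(w)=\{F_r\}$; this is immediate. For part (1), the natural candidate is $Y_r:=Q_r^\circ=P'(\lambda,\mu,r)^\circ$, the set of freely reduced words whose cyclic reduction lies in $Q_r$. By Proposition-Definition~\ref{circ}(2), exponential genericity in $U_r$ passes to exponential genericity of $Q_r^\circ$ in $F_r$. The remaining point is that $\pi$ and $Crit$ are invariant under passing between a word and its cyclic reduction: if $w=u\,w'\,u^{-1}$ with $w'$ cyclically reduced, then $w$ and $w'$ are conjugate in $F_r$, hence differ by an inner automorphism, and since $\pi$ and $Crit$ are automorphically invariant (as recalled in the Introduction, with $Crit(\phi(g))=\phi(Crit(g))$ for $\phi\in\Aut(F_r)$), conjugation by $u$ carries $Crit(w')=\{F_r\}$ to $Crit(w)=\{u F_r u^{-1}\}=\{F_r\}$ and preserves $\pi(w)=\pi(w')=r$. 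Thus every $w\in Y_r$ inherits the desired conclusion.

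For part (3), I would invoke the standard dictionary between exponential genericity in $F_r$ and the simple non-backtracking random walk. A word $W_n$ produced by a non-backtracking random walk of length $n$ is distributed uniformly on the sphere of radius $n$ in $F_r$ (each of the $2r(2r-1)^{n-1}$ freely reduced words of length exactly $n$ is equally likely). By Proposition~\ref{prop:use}(a), the complement $F_r\setminus Y_r$ being exponentially negligible means precisely that $\gamma_n(F_r\setminus Y_r)/(2r-1)^n\to 0$ exponentially fast, which is (up to the bounded factor $2r/(2r-1)$) the probability that $W_n\notin Y_r$. Hence $\Pr[W_n\in Y_r]\to 1$ exponentially fast, and on this event $\pi(W_n)=r$ and $Crit(W_n)=\{F_r\}$ by part (1).

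I do not expect any genuine obstacle here, since all the substantive work has been carried out in Theorem~\ref{thm:primrank} and Propositions~\ref{prop:AO}--\ref{prop:AOm}; the present argument is purely a matter of bookkeeping. If there is a delicate point to state carefully, it is the conjugation-invariance step in part (1)—one must make sure that the cyclically reduced form is genuinely a conjugate (an inner-automorphic image) of the original word, so that the automorphic invariance of $\pi$ and $Crit$ applies verbatim—and the precise identification in part (3) of the law of $W_n$ with the uniform distribution on the $n$-sphere, so that Proposition~\ref{prop:use}(a) can be applied with exponentially fast convergence.
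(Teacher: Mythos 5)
Your proposal is correct and follows essentially the same route as the paper's own proof: setting $Q_r=P'(\lambda,\mu,r)$ via Proposition~\ref{prop:AOm} and Theorem~\ref{thm:primrank}, taking $Y_r=Q_r^\circ$ with Proposition-Definition~\ref{circ} and conjugation-invariance of $\pi$ and $Crit$, and identifying the law of $W_n$ with the uniform distribution on the $n$-sphere to apply Proposition~\ref{prop:use}. Your added remarks (nonemptiness of the parameter region, rationality of $\lambda,\mu$, and the bounded factor $2r/(2r-1)$ in part (3)) are accurate refinements of details the paper leaves implicit.
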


\begin{proof}

Choose $\lambda,\mu$ as in the assumptions of Theorem~\ref{thm:primrank} and put $Q_r=P'(\lambda, \mu,r)$. Then $Q_r$ is exponentially generic in $U_r$ by Proposition~\ref{prop:AOm}. Then $Q_r$ satisfies the requirements of part (2) by Theorem~\ref{thm:primrank}.

Now put $Y_r=Q_r^\circ$. Then $Y_r$ is exponentially generic in $F_r$ by Proposition-Definition~\ref{circ}. Every element $w$ of $Y_r$ is conjugate in $F_r$ to some element $w'$ of $Q_r$ in $F_r$, that is $w=gw'g^{-1}$ for some $g\in F_r$. Then $\pi(w)=\pi(w')$ and $Crit(w)=gCrit(w')g^{-1}=\{gF_rg^{-1}\}=\{F_r\}$. Thus the requirements of (1) hold for $Y_r$.

Note that the simple non-backtracking random walk $W_n$ of length $n$ on $F(A)$ induces the uniform probability distribution on the $n$-sphere in $F(A)$. 
Therefore (3) directly follows from (1) in view of part 3 of Proposition~\ref{prop:use}.

\end{proof}

\vspace{1cm}

\end{document}